\documentclass{article}

\usepackage[utf8]{inputenc}
\usepackage[english]{babel}
\usepackage{amsthm}
\usepackage{amsmath}
\usepackage{amssymb}
\usepackage{url}
\usepackage{enumerate}
\usepackage{color}

\theoremstyle{plain}
\newtheorem{theorem}{Theorem}
\newtheorem{lemma}{Lemma}
\newtheorem{prop}[lemma]{Proposition}
\newtheorem{cor}[lemma]{Corollary}

\theoremstyle{definition}

\theoremstyle{remark}

\newtheorem{example}{Example}

\newcommand{\F}{\mathbb{F}}

\renewcommand{\S}{\mathcal{S}}

\title{On the expansion complexity of sequences over finite fields}
\author{
Domingo G\'omez-P\'erez, L\'aszl\'o M\'erai, Harald Niederreiter
}

\begin{document}

 \maketitle

\begin{abstract}
In 2012, Diem introduced a new figure of merit for cryptographic
sequences called expansion complexity. In this paper, we slightly
modify this notion to obtain the so-called irreducible-expansion complexity which is more suitable for certain
applications. We analyze both, the classical and the modified expansion
complexity. Moreover, we also study the expansion complexity of the
explicit inversive congruential generator.
\end{abstract}

\textit{Key words and phases:} pseudorandom sequence, expansion complexity, inversive generator

\section{Introduction}

Sequences over finite fields which are generated by  a short linear recurrence relation
are considered  cryptographically weak. This observation leads to the
notion of \emph{linear complexity profile} of sequences, which is an
infinite sequence of nondecreasing integers such that the $N$th term
is the length of a shortest linear recurrence relation which generates
the first $N$ elements of the sequence.
The linear complexity profile is a measure for the unpredictability of
a sequence and thus its suitability in cryptography. A sequence with
small $N$th linear complexity (for a sufficiently large $N$) is
disastrous for cryptographic applications.
We recommend the interested reader to consult the survey of Meidl and
Winterhof~\cite{MW13}  and previous articles by
Niederreiter~\cite{Niederreiter03} and Winterhof~\cite{Winterhof10}.

Xing and Lam~\cite{XL99} gave a general construction of infinite
sequences over finite fields with optimal linear complexity. The
construction is based on functional expansion into expansion series.
Diem~\cite{di12} showed that this type of sequence can be efficiently
computed from a relatively short subsequence. This observation leads
to the \emph{expansion complexity}. For the connection between the
linear and expansion complexity we refer to the recent paper
\cite{MeNiWi2016+}.

In this paper we study the properties of this figure of merit for
sequences over finite fields. In Section~\ref{sec:expansion} we slightly modify the notion
of expansion complexity to obtain the so-called i(rreducible)-expansion complexity which is more suitable for certain applications. We analyze the properties of both the classical and the modified expansion
complexity. Then we study the expansion complexity of the explicit
inversive congruential generator in Section~\ref{sec:inversive}. We prove that this
sequence has optimal expansion complexity
and we give a lower bound on the expansion complexity if the sequence
is randomly shifted. We finish the paper with a summary of the results
in Section~\ref{sec:conclusions}.

\section{Expansion sequences and expansion complexity}
\label{sec:expansion}

For a sequence $\S=(s_i)_{i=0}^\infty$ over the finite field $\F_q$ of
$q$ elements, we define the \emph{generating function} $G(x)$ of $\S$
by
\begin{equation*}
G(x) = \sum_{i=0}^{\infty}s_ix^i,
\end{equation*}
viewed as a formal power series over $\F_q$.

A sequence $\S$ is called an \emph{expansion sequence} if its
generating function satisfies an algebraic equation
\begin{equation}\label{eq:h}
 h(x,G(x))=0
\end{equation}
for some nonzero $h(x,y)\in\F_q[x,y]$. Clearly, the polynomials
$h(x,y)\in\F_q[x,y]$ satisfying \eqref{eq:h} form an ideal in
$\F_q[x,y]$. This ideal is called the \emph{defining ideal} and it is
a principal ideal generated by an irreducible polynomial, see
\cite[Proposition~4]{di12}.

Expansion sequences  can be efficiently computed from a relatively
short subsequence via the generating polynomial of its defining
ideal~\cite[Section~5]{di12}.

\begin{prop}
\label{prop:expansion_seq}
Let $\S$ be an expansion sequence and let $h(x,y)$ be the generating
polynomial of its defining ideal.
The sequence $\S$ is uniquely determined by $h(x,y)$ and its initial
sequence of length $(\deg h)^2$.
Moreover, $h(x,y)$ can be computed in polynomial time (in $\log q
\cdot \deg h $) from an initial sequence of length $(\deg h)^2$.
\end{prop}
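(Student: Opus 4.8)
The plan is to think of $h(x,y) = \sum_{j} c_j(x) y^j$ as an unknown polynomial of $x$-degree less than $d$ and $y$-degree less than $d$ (where $d = \deg h$), and to show that the equation $h(x, G(x)) = 0$ translates, coefficient by coefficient in the formal power series $G(x)$, into a homogeneous linear system over $\F_q$ in the $d^2$ unknown coefficients of $h$. First I would fix the shape: writing $h(x,y) = \sum_{i=0}^{d-1}\sum_{j=0}^{d-1} a_{ij} x^i y^j$ with unknowns $a_{ij} \in \F_q$, the left side $h(x, G(x))$ is a formal power series whose $n$th coefficient is an explicit $\F_q$-linear form in the $a_{ij}$, with coefficients built from the products of at most $j \le d-1$ of the $s_i$'s; crucially, the $n$th coefficient depends only on $s_0, \dots, s_n$. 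Requiring this series to vanish identically gives one linear constraint per $n \ge 0$; the first $d^2$ of these constraints (i.e. for $n = 0, 1, \dots, d^2 - 1$) only involve $s_0, \dots, s_{d^2-1}$.

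Let

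Setermine.

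The key step is to argue that truncating at $n = d^2 - 1$ loses no information. Since the defining ideal is principal and generated by an irreducible $h$ of total degree $d$ (by \cite[Proposition~4]{di12}), any solution of the truncated system must be a constant multiple of $h$: if $\tilde h$ is another solution of the first $d^2$ equations that is not a multiple of $h$, then $h$ and $\tilde h$ share no common factor, so by the theory of resultants $\mathrm{Res}_y(h, \tilde h)(x)$ is a nonzero polynomial in $x$ of degree at most $2d^2 - $ something — more precisely, a nonzero polynomial of degree $< d^2$ (the Bézout-type bound, since both have $x$-degree $< d$ and $y$-degree $< d$) — that is divisible by high powers of $x$, forcing it to be zero, a contradiction. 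Hence the solution space of the truncated system is exactly the line $\F_q \cdot h$, which both determines $h$ up to normalization and shows $h$ is recoverable from $s_0, \dots, s_{d^2-1}$. This simultaneously gives the uniqueness claim: $h$ together with $s_0, \dots, s_{d^2-1}$ determines all remaining $s_n$, because once $h$ is known the relation $h(x, G(x)) = 0$ can be solved for $s_n$ recursively (the coefficient of $y^0$ in $h$, namely $c_0(x)$, or rather the relevant leading behaviour, lets one isolate $s_n$ from $s_0, \dots, s_{n-1}$; one should note that this requires a mild genericity observation about $h$, or alternatively one invokes that an algebraic power series is determined by enough initial terms once its minimal polynomial is fixed).

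For the complexity claim, I would simply observe that building the $d^2 \times d^2$ coefficient matrix of the truncated linear system costs polynomially many $\F_q$-operations in $d$ (computing the needed power-series products $G(x)^j$ for $j < d$ up to precision $d^2$), and solving a linear system of size $d^2$ over $\F_q$ by Gaussian elimination is $O(d^6)$ field operations, each of which costs $(\log q)^{O(1)}$ bit operations; kernel extraction then yields $h$ up to a scalar. The main obstacle I anticipate is the recursive-reconstruction half of the uniqueness statement: one has to be careful that $h(x, G(x)) = 0$ genuinely lets you solve for $s_n$ in terms of earlier terms — this can fail if the $y$-leading structure of $h$ is degenerate — and the cleanest fix is probably the resultant/Bézout argument above, which shows directly that \emph{no} two distinct expansion sequences (with defining polynomial of degree $\le d$) agree on their first $d^2$ terms, so that the map from (polynomial, initial segment) to sequence is injective without needing an explicit recursion. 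I would also double-check the exact Bézout bound to confirm $d^2$ (and not, say, $d^2+d$) is the right truncation length, since \cite{di12} presumably pins this down.
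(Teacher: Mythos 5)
The paper itself gives no proof of this proposition (it is quoted from Diem \cite{di12}, Section~5), so there is no in-paper argument to compare against; judging your proposal on its own terms, it has a genuine gap exactly at the point you flagged. Your central claim is that every polynomial $\tilde h$ of total degree at most $d=\deg h$ with $\tilde h(x,G(x))\equiv 0 \bmod x^{d^2}$ must be a constant multiple of $h$, justified by asserting that $\mathrm{Res}_y(h,\tilde h)$ is a nonzero polynomial in $x$ of degree $<d^2$. The Bézout-type bound only gives degree $\le d^2$, and it is attained, so divisibility by $x^{d^2}$ produces no contradiction: the resultant can be exactly $c\,x^{d^2}$. A concrete counterexample: let $h(x,y)=y^2-y+x$ (irreducible over any $\F_q$) and let $G$ be its power series root determined by $G=x+G^2$, $G(0)=0$. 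Here $d=2$, $d^2=4$, and $\tilde h(x,y)=(y-x)^2$ has total degree $2$, is not proportional to $h$, yet $\tilde h(x,G(x))=(G-x)^2=G^4\equiv 0\bmod x^4$, with $\mathrm{Res}_y\bigl(h,\tilde h\bigr)=h(x,x)^2=x^4$. So the kernel of your truncated linear system at length $(\deg h)^2$ is not the line $\F_q h$, the minimal-degree solution need not be the generator of the defining ideal (it need not even be irreducible), and the recovery-by-kernel step fails as stated; the resultant argument only closes if the congruence is known modulo $x^{d^2+1}$, i.e.\ with a longer initial segment, so the exact truncation length is not a detail to ``double-check'' but the crux that the cited argument must handle.

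The same gap affects the uniqueness half, which you wanted to obtain from the same one-dimensional-kernel claim. What that half actually requires is that two distinct power series roots $G_1\ne G_2$ of the \emph{same} irreducible $h$ cannot agree in their first $d^2$ coefficients; there $\mathrm{Res}_y(h,\tilde h)$ is useless (it vanishes for $\tilde h=h$), and the right tool is $R(x)=\mathrm{Res}_y(h,\partial h/\partial y)$, nonzero when $\partial h/\partial y\ne 0$, of degree at most $d(d-1)<d^2$: writing $R=Ah+B\,\partial h/\partial y$, factoring $h=(y-G_1)(y-G_2)q(x,y)$ with $q\in\F_q[[x]][y]$ and evaluating at $y=G_1$ gives $\v(G_1-G_2)\le d(d-1)$, which does prove that $h$ together with the first $(\deg h)^2$ terms determines $\S$. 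You would still need to treat the inseparable case $\partial h/\partial y=0$ separately, explain how the algorithm knows $\deg h$, and use the monomials of total degree $\le d$ rather than the box $\{x^iy^j: i,j\le d-1\}$, which omits $x^d$ and $y^d$ (indeed it does not even contain the $h$ in the example above). In short, the toolbox (linear algebra plus resultants) is the right one, but the key step ``the solution space at truncation $d^2$ is one-dimensional'' is false, and the two halves of the proposition need two different resultant arguments rather than the single kernel claim.
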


Based on Proposition \ref{prop:expansion_seq}, Diem~\cite{di12}
defined the $N$th expansion complexity in the following way.
For a positive integer $N$, the {\em $N$th expansion complexity}
$E_N=E_N({\cal S})$ is $E_N=0$ if $s_0=\ldots=s_{N-1}=0$ and otherwise
the least total degree of a nonzero polynomial $h(x,y)\in \F_q[x,y]$ with
\begin{equation}
\label{eqcon}
h(x,G(x))\equiv 0 \mod x^N.
\end{equation}
Note that $E_N$ depends only on the first $N$ terms of ${\cal S}$.
However, small expansion complexity does not imply high predictability
in the sense of Proposition~\ref{prop:expansion_seq}.

\begin{example}\label{ex:1}
 Let $\S$ be a sequence  over the finite field $\F_p$ ($p\geq 3$) with
 initial segment $\S=000001\dots$ and generating function $G(x)\equiv
 x^5 \mod x^6$. Then its 6th expansion complexity is $E_2(\S)=2$
 realized by the polynomial $h(x,y)=x\cdot y$. However, the first 4
 elements do not determine the whole initial segment with length 6.
\end{example}

In order to achieve the predictability of sequences in terms of
Proposition~\ref{prop:expansion_seq}, one needs to require that the
polynomial $h(x,y)$ satisfying \eqref{eqcon} is
\emph{irreducible}. This observation leads to  the
\emph{i(rreducible)-expansion complexity} of a sequence. Accordingly,
for a positive integer $N$, the {\em $N$th i-expansion complexity}
$E^*_N=E^*_N({\cal S})$ is $E_N^*=0$ if $s_0=\ldots=s_{N-1}=0$ and
otherwise the least total degree
of an  irreducible polynomial $h(x,y)\in \F_q[x,y]$ with
\eqref{eqcon}.

\begin{example}
 Let $\S$ be the sequence in Example \ref{ex:1}. Then its 6th
 i-expansion complexity is $E^*_6(\S)=5$ realized by the polynomial
 $h(x,y)=y-x^5$.
\end{example}

Clearly, for any sequence $\S$ we have
\[
  E_N^*(\S)\leq E_{N+1}^*(\S)
\]
and
\begin{equation}\label{eq:E-bound}
 E_N(\S)\leq E_N^*(\S)\leq \max\{1, N-1\}.
\end{equation}
The second inequality immediately gives a bound on the expansion
complexity. In the following theorem we give a stronger bound.

\begin{theorem}\label{thm:E-bound}
For any sequence
$\S$,
the expansion complexity $E_N(\S)$ satisfies the following inequality:
\begin{equation}\label{eq:abs_bound}
\binom{E_N(\S)+1}{2}\leq N.
\end{equation}
\end{theorem}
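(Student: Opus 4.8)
The goal is to show that if $h(x,y)$ is a nonzero polynomial with $h(x,G(x))\equiv 0 \pmod{x^N}$ of minimal total degree $E := E_N(\mathcal S)$, then $\binom{E+1}{2}\le N$. The key idea is a dimension-counting / linear-algebra argument: the set of candidate polynomials $h(x,y)$ of total degree at most $d$ forms an $\F_q$-vector space of dimension $\binom{d+2}{2}$, while imposing the congruence $h(x,G(x))\equiv 0\pmod{x^N}$ is at most $N$ linear conditions (one for each coefficient of $x^0,\dots,x^{N-1}$). Hence as soon as $\binom{d+2}{2} > N$, a nonzero solution $h$ of total degree $\le d$ must exist, which forces $E_N(\mathcal S) \le d$. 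The plan is to turn this existence statement around into the stated inequality.

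\begin{proof}[Proof plan]
First I would handle the trivial cases: if $s_0=\dots=s_{N-1}=0$ then $E_N(\mathcal S)=0$ and $\binom{1}{2}=0\le N$, so assume $E:=E_N(\mathcal S)\ge 1$. Next, fix an integer $d\ge 0$ and consider the $\F_q$-linear map
\[
\Phi_d\colon\; \{h(x,y)\in\F_q[x,y] : \deg h \le d\}\;\longrightarrow\; \F_q^N,
\qquad h \longmapsto \bigl(\text{coefficients of }x^0,\dots,x^{N-1}\text{ in }h(x,G(x))\bigr).
\]
The domain is a vector space of dimension $\binom{d+2}{2}$ (the number of monomials $x^a y^b$ with $a+b\le d$), and the codomain has dimension $N$. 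If $\binom{d+2}{2} > N$, then $\Phi_d$ has nontrivial kernel, so there is a nonzero $h$ with $\deg h\le d$ and $h(x,G(x))\equiv 0\pmod{x^N}$; by definition of $E_N$ this gives $E\le d$.

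Now I would choose $d$ optimally. Taking $d = E-1$, minimality of $E$ means no nonzero $h$ of total degree $\le E-1$ satisfies the congruence, i.e.\ $\Phi_{E-1}$ is injective; hence $\binom{E+1}{2}\le N$, which is exactly \eqref{eq:abs_bound}. The only subtlety is making sure the count of monomials of total degree $\le d$ is $\binom{d+2}{2}$ and that $\binom{(E-1)+2}{2}=\binom{E+1}{2}$, both of which are immediate.

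The main thing to be careful about is the direction of the logic: I want a statement of the form ``small degree $\Rightarrow$ the map is injective $\Rightarrow$ degree bound,'' rather than ``large dimension $\Rightarrow$ solution exists.'' Phrased via injectivity of $\Phi_{E-1}$ there is essentially no obstacle — this is a clean pigeonhole/rank argument and does not even use that the generating polynomial of the defining ideal is irreducible, nor Proposition~\ref{prop:expansion_seq}. I do not foresee a genuinely hard step; the only place to slip is an off-by-one in the binomial coefficient bookkeeping.
\end{proof}
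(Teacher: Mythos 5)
Your argument is correct and is essentially the paper's own proof: both count the $\binom{d+2}{2}$ monomials $x^iy^j$ with $i+j\le d$, evaluate at $y=G(x)$ modulo $x^N$ into an $N$-dimensional space, and take $d=E_N(\S)-1$ so that minimality of $E_N$ forces the evaluation map to be injective (the paper phrases it as avoiding a linear dependence), giving $\binom{E_N(\S)+1}{2}\le N$. Your explicit handling of the trivial case $E_N=0$ and of the injectivity phrasing is just a slightly more careful write-up of the same pigeonhole/rank argument.
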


\begin{proof}
With an integer $d$, consider the set of monomials
\begin{equation*}
M(d) = \{x^{i}y^{j} | i+j\le d\}
\end{equation*}
of size $\#M(d)=\binom{d+2}{2}$.
For each monomial in that set,
$x^iy^j\in M(d)$, we substitute \(y=G(x)\) and reduce it modulo
\(x^N\) to obtain a
polynomial of degree  at most \(N-1\). The set of all polynomials of
degree less
than $N$ is a vector space over $\F_q$ of dimension $N$. Each of the
evaluations of the monomials in
$M(d)$ gives a polynomial in that space and there are
$\binom{d+2}{2}$ of these monomials, which means that they are linearly dependent
if there are more than $N$. Now we put $d=E_N(\S)-1$. If \eqref{eq:abs_bound} were not satisfied, then the argument just presented leads to a contradiction.
\end{proof}

It follows from \eqref{eq:abs_bound} that $E_N(\S)\leq
\sqrt{2N}$. On the other hand, for the i-expansion complexity, we have
$E_N^{*}(\S)\geq \sqrt{2N}$ for almost all sequences, it as will be shown in Theorem~\ref{thm:prob} below.

Let $\mu_q$ be the uniform probability measure on $\F_q$ which assigns
the measure $1/q$ to each element of $\F_q$. Let $\F_q^{\infty}$ be
the sequence space over $\F_q$
and let $\mu_q^{\infty}$ be the complete product probability measure
on $\F_q^{\infty}$ induced by $\mu_q$. We say that a property of
sequences ${\cal S} \in \F_q^{\infty}$
holds $\mu_q^{\infty}$-\emph{almost everywhere} if it holds for a set
of sequences ${\cal S}$ of $\mu_q^{\infty}$-measure $1$. We may view
such a property as a typical property of a random sequence over $\F_q$.
\begin{theorem}
  \label{thm:prob}
  We have
  \[
    \liminf_{N \to \infty} \, \frac{E^*_N({\cal S})}{\sqrt{2N}} \geq 1
    \qquad \mu_q^{\infty}\mbox{-almost everywhere}.
  \]
\end{theorem}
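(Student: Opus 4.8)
The plan is to run a first-moment (union bound / Borel--Cantelli) argument. Fix $\varepsilon > 0$ and for each $N$ let $A_N$ be the event that $E^*_N(\mathcal S) \le (1-\varepsilon)\sqrt{2N}$. It suffices to show that $\sum_N \mu_q^\infty(A_N) < \infty$, since then by Borel--Cantelli only finitely many $A_N$ occur almost surely, which gives $\liminf_N E^*_N/\sqrt{2N} \ge 1-\varepsilon$; letting $\varepsilon \to 0$ through a countable sequence finishes the proof. So the whole argument reduces to estimating the probability that there exists an irreducible (in fact we may drop irreducibility for the upper bound on the probability — it only makes the event bigger) polynomial $h(x,y)$ of total degree $d := \lfloor (1-\varepsilon)\sqrt{2N}\rfloor$ with $h(x,G(x)) \equiv 0 \bmod x^N$.

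Next I would reduce this to a counting problem over finitely many polynomials. A nonzero polynomial $h$ of total degree $\le d$ has coefficient vector in $\F_q^{M}$ with $M = \binom{d+2}{2}$; scaling does not change the condition \eqref{eqcon}, so there are at most $(q^M-1)/(q-1) \le q^{M}$ relevant polynomials, or one can simply work with the $q^M$ coefficient vectors and accept a harmless overcount. For a \emph{fixed} nonzero $h$, the condition $h(x,G(x)) \equiv 0 \bmod x^N$ says that $N$ specified coefficients — those of $x^0,\dots,x^{N-1}$ in the power series $h(x,G(x))$ — all vanish. The key linear-algebra observation is that the $k$th coefficient of $h(x,G(x))$ is a polynomial in $s_0,\dots,s_k$, and crucially, for $k$ in a suitable range it is \emph{affine in $s_k$ with a nonzero leading coefficient}: writing $h(x,y) = \sum_j h_j(x) y^j$ with not all $h_j$ zero, pick the smallest $t$ with $h_t \ne 0$ and let $v$ be the $x$-adic valuation of $h_t$; then for each $k$ the coefficient of $x^{k+v}$ in $h(x,G(x))$ equals (nonzero constant) $\cdot s_k^{\,?}$ plus terms depending only on $s_0,\dots,s_{k-1}$ — one has to be a little careful because of the exponent on $s_k$, so the cleanest route is to observe that as $(s_0,s_1,\dots)$ ranges over $\F_q^\infty$ the map to the coefficient sequence of $h(x,G(x))$ is, coordinate by coordinate, such that fixing the past leaves at most $\deg_y h \le d$ choices of $s_k$ making the next required coefficient vanish. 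Hence, conditioning successively, the probability that all $N$ coefficients vanish is at most $(d/q)^{\lfloor N/2\rfloor}$ or, being safe, at most $q^{-c N}$ for an explicit constant $c = c(q,\varepsilon) > 0$ once $N$ is large — because $d = o(N)$, so the "at most $d$ good values out of $q$" bound, applied along an arithmetic progression of indices $k$ of density bounded below, beats the polynomial factor.

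Then I would assemble the union bound: $\mu_q^\infty(A_N) \le q^{M} \cdot q^{-cN}$. Since $M = \binom{d+2}{2} \sim d^2/2 \le (1-\varepsilon)^2 N < N$, and more importantly $M \le (1-\varepsilon/2)N$ say for large $N$, while the per-polynomial bound decays like $q^{-cN}$ with $c$ not too small, we need the total exponent $M - cN$ to tend to $-\infty$. The delicate point — and the main obstacle — is exactly this bookkeeping: one must choose the valuation/leading-coefficient normalization of $h$ so that the "successful vanishing" of the required $N$ coefficients genuinely costs a factor $q^{-(1+\delta)M}$ worth of probability (not merely $q^{-M}$, which would only barely fail), i.e. one needs the number of \emph{independent} linear constraints that the condition \eqref{eqcon} imposes on a random $\mathcal S$ to exceed $M$ by a definite proportion of $N$. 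This is plausible because $\binom{d+1}{2} \le \binom{d+2}{2} = M$ is strictly less than $N$ by the factor $(1-\varepsilon)^2$, leaving room of order $\varepsilon N$; the technical work is to show that at least $(1 - (1-\varepsilon)^2 - o(1))N$ of the $N$ coefficient equations are "new" in the sense of pinning down a fresh $s_k$ with probability $\le 1/q$ each (after the past is fixed, and after discarding at most $d$ "bad" values the constraint could permit). Once that proportionality is in hand, $\sum_N q^{M - (1+\delta)M'} < \infty$ with $M' \ge (1+\delta')N \cdot (\text{const})$ converges geometrically, and Borel--Cantelli closes the argument. I would also remark that the same computation, read in the other direction, matches the upper bound $E_N(\mathcal S) \le \sqrt{2N}$ from Theorem~\ref{thm:E-bound}, so the constant $1$ in the statement is best possible.
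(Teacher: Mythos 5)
Your outer framework (fix $\varepsilon$, bound $\mu_q^\infty(A_N)$, apply Borel--Cantelli, intersect over $\varepsilon=1/r$) coincides with the paper's, but the core estimate is where the proposal breaks down. The paper never bounds, for a fixed $h$, the probability that a random sequence satisfies \eqref{eqcon}; instead it counts \emph{sequences per irreducible polynomial}: by Diem's result an irreducible $h$ of degree $d$ defines at most $d$ expansion sequences, and by Carlitz's count there are roughly $q^{\binom{d+2}{2}-1}$ normalized irreducible polynomials of total degree $d$, so the number of length-$N$ prefixes with $E^*_N\le \lfloor(1-\varepsilon)\sqrt{2N}\rfloor$ is $\ll q^{(1-\varepsilon)^2N+o(N)}$, whence $\mu_q^{\infty}(A_N)\le q^{-\delta N}$. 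Irreducibility is essential to the ``at most $d$ sequences per polynomial'' step, and this is precisely why the theorem concerns $E^*_N$ and is described as the corrected form of Theorem~4 of \cite{MeNiWi2016+}.

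Your proposal discards irreducibility at the outset and then needs, for \emph{every} nonzero $h$ of total degree $d=\lfloor(1-\varepsilon)\sqrt{2N}\rfloor$, a per-polynomial bound of the shape $q^{-cN}$ with $c>(1-\varepsilon)^2$, justified by ``at most $\deg_y h\le d$ good values of the fresh term $s_k$ at each step.'' That estimate is false for reducible $h$: take $h(x,y)=(y-P(x))^d$ with $\deg P\le 1$, a polynomial of total degree $d$. Then $h(x,G(x))\equiv 0\bmod x^N$ as soon as $G(x)\equiv P(x)\bmod x^{\lceil N/d\rceil}$, an event of probability $q^{-\lceil N/d\rceil+O(1)}=q^{-O(\sqrt{N})}$, vastly larger than any $q^{-cN}$; once the initial segment is pinned down, every later coefficient of $h(x,G(x))$ vanishes identically in the remaining $s_k$, so ``at most $d$ good values out of $q$'' fails (all $q$ values are good). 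Repeated or $x$-adically close factors, and terms like $y^p$ in characteristic $p$ whose coefficient equations skip most of the $s_k$, exhibit the same phenomenon, and it is exactly the difficulty that forced the correction of \cite{MeNiWi2016+}; handling it for arbitrary reducible $h$ would require a much finer case analysis than a uniform union bound. Moreover, even where your sequential-conditioning bound is valid, you yourself identify the decisive bookkeeping --- that at least about $\bigl(1-(1-\varepsilon)^2\bigr)N$ of the $N$ coefficient equations impose fresh constraints --- as the main obstacle and leave it unproved, and your fallback exponent $N/2$ is insufficient whenever $\varepsilon<1-1/\sqrt{2}$. So the proposal has a genuine gap at its central step; the paper's route through irreducible polynomials (Diem's bound plus Carlitz's count) is what avoids these difficulties.
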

We remark, that Theorem~\ref{thm:prob} is the corrected form of \cite[Theorem~4]{MeNiWi2016+}.

\begin{proof}
  First we fix $\varepsilon$ with $0 < \varepsilon < 1$ and we put
\begin{equation*}
b_N= \lfloor (1-\varepsilon)\sqrt{2N} \rfloor \qquad \mbox{for } N=1,2,\ldots .
\end{equation*}
Then $b_N \ge 1$ for all sufficiently large $N$. For such $N$ put
\[
A_N=\{{\cal S} \in \F_q^{\infty}: E^*_N({\cal S}) \le b_N\}.
\]
Since $E^*_N({\cal S})$ depends only on the first $N$ terms of ${\cal
  S}$, the measure $\mu_q^{\infty}(A_N)$ is given by
\begin{equation} \label{equq}
\mu_q^{\infty}(A_N) =q^{-N} \cdot \# \{{\cal S} \in \F_q^N: E^*_N({\cal S}) \le b_N\}.
\end{equation}
An irreducible polynomial with degree $d$ can define at most $d$
expansion sequences (see \cite[p. 332]{di12}). Moreover, if two irreducible polynomials are
constant multiples of each other, they define the same sequences.

Let a polynomial $f(x,y)$ of degree $d$ be called \emph{normalized}
if in the  coefficient vector $(a_0,a_1,\dots, a_d)$ of the
homogeneous part with degree $d$ of $f$, i.e.,
\[
 a_0x^d+a_1x^{d-1}y+\dots+a_d y^d,
\]
the first nonzero element is 1.

Let $I_2(d)$ be the number of  \emph{normalized} irreducible
polynomials (with two variables) in $\F_q[x,y]$ of total degree $d$. Then by
\cite{carlitz63} we have
\[
 I_2(d)=\frac{1}{q-1}q^{\binom{d+2}{2}}+O\left(q^{\binom{d+1}{2}}\right).
\]
Thus
\begin{align}\label{eq:bound}
  \{{\cal S} \in \F_q^N: E^*_N({\cal S}) \le b_N\}\leq \sum_{d=1}^{b_N} d \cdot I_2(d)
  \ll  \sum_{d=1}^{b_N} d \cdot q^{\binom{d+2}{2}-1}\ll b_N q^{\binom{b_N+2}{2}-1}.
\end{align}
Thus it follows from \eqref{equq} and \eqref{eq:bound} that
$\mu_q^{\infty}(A_N)\leq q^{-\delta N}$ for some positive $\delta$ and
for all sufficiently large $N$. Therefore
$\sum_{N=1}^{\infty} \mu_q^{\infty}(A_N) < \infty$. Then the
Borel-Cantelli lemma (see \cite[Lemma~3.14]{Br}) shows that the set of all ${\cal S} \in
\F_q^{\infty}$ for which ${\cal S} \in A_N$ for infinitely many $N$
has $\mu_q^{\infty}$-measure $0$. In other words,
$\mu_q^{\infty}$-almost everywhere we have
${\cal S} \in A_N$ for at most finitely many $N$. It follows then from
the definition of $A_N$ that $\mu_q^{\infty}$-almost everywhere we
have
$$
E^*_N({\cal S}) > b_N > (1-\varepsilon) \sqrt{2N} -1
$$
for all sufficiently large $N$. Therefore $\mu_q^{\infty}$-almost
everywhere,
$$
\liminf_{N \to \infty} \, \frac{E^*_N({\cal S})}{\sqrt{2N}} \geq
1-\varepsilon.
$$
By applying this for $\varepsilon =1/r$ with $r=1,2,\ldots$ and noting
that the intersection of countably many sets of
$\mu_q^{\infty}$-measure $1$ has again $\mu_q^{\infty}$-measure $1$,
we obtain the result of the theorem.
\end{proof}

We finish this section showing that, for sequences having maximal
expansion complexity, we have $E^*_N({\cal S}) = E_{N}({\cal S})$.
\begin{theorem}
  \label{star}
  If the sequence $\mathcal{S}$ has maximal expansion complexity,
  i.e. if for $d\ge 1$, we have
\begin{equation*}
   E_N(\S)=d \quad \text{whenever} \quad \binom{d+1}{2}\leq N<\binom{d+2}{2},
 \end{equation*}
 then
 \begin{equation*}
   E^*_N(\S) = d' \quad \text{whenever} \quad \binom{d'+1}{2}+2\leq N<\binom{d'+2}{2},
 \end{equation*}
 for $d'\geq 6$.
\end{theorem}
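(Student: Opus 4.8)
The plan is to reduce the claim to the single inequality $E^*_N(\S)\le d'$ and then to establish that inequality by factoring a degree-$d'$ witness for $E_N(\S)$ and showing one of its irreducible factors already does the job. Since the hypotheses put $N$ in the range $\binom{d'+1}{2}\le N<\binom{d'+2}{2}$, the maximality assumption gives $E_N(\S)=d'$, and then \eqref{eq:E-bound} yields $E^*_N(\S)\ge d'$; so everything comes down to exhibiting an \emph{irreducible} $h(x,y)\in\F_q[x,y]$ of total degree at most $d'$ with $h(x,G(x))\equiv 0\pmod{x^N}$.

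To produce such an $h$ I would start from any nonzero $h_0(x,y)\in\F_q[x,y]$ of total degree exactly $d'$ with $h_0(x,G(x))\equiv 0\pmod{x^N}$ — one exists precisely because $E_N(\S)=d'$ — and argue that $h_0$ must itself be irreducible. Write $h_0=c\,\gamma_1\cdots\gamma_m$ with $c\in\F_q^\times$ and the $\gamma_j$ irreducible, so $\sum_{j=1}^m\deg\gamma_j=d'$. Since $\F_q[[x]]$ is a discrete valuation ring with $x$-adic valuation $\v$ and the substitution $y\mapsto G(x)$ is a ring homomorphism $\F_q[x,y]\to\F_q[[x]]$, the valuation is additive along the factorization:
\[
\v\bigl(h_0(x,G(x))\bigr)=\sum_{j=1}^m w_j\ge N,\qquad\text{where }w_j:=\v\bigl(\gamma_j(x,G(x))\bigr).
\]
If some $w_j\ge N$, then that $\gamma_j$ is irreducible, has degree at most $d'$, and satisfies $\gamma_j(x,G(x))\equiv 0\pmod{x^N}$ — done. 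The crux is therefore to rule out the remaining case $w_j<N$ for every $j$, in which one must have $m\ge 2$ (if $m=1$ then $w_1=\v(h_0(x,G(x)))\ge N$).

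Here the maximality hypothesis does the work. In the remaining case each $\deg\gamma_j\le d'-1$, and for each $j$ with $w_j\ge 1$ the polynomial $\gamma_j$ witnesses $E_{w_j}(\S)\le\deg\gamma_j$; maximality forces $E_{w_j}(\S)$ to be the unique $e$ with $\binom{e+1}{2}\le w_j<\binom{e+2}{2}$, and since $e\le\deg\gamma_j$ this gives $w_j\le\binom{\deg\gamma_j+2}{2}-1$ (trivially true also when $w_j=0$, as $\deg\gamma_j\ge 1$). A short convexity/extremal estimate over all partitions $d'=\delta_1+\dots+\delta_m$ into $m\ge 2$ positive parts — the function $\delta\mapsto\binom{\delta+2}{2}-1$ being strictly convex, so the maximum is attained at the partition $(d'-1,1)$ — then shows $\sum_{j}\bigl(\binom{\delta_j+2}{2}-1\bigr)\le\binom{d'+1}{2}+1$. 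Combining, $N\le\sum_j w_j\le\binom{d'+1}{2}+1$, which contradicts $N\ge\binom{d'+1}{2}+2$. Hence $m=1$, i.e.\ $h_0$ is irreducible, so $E^*_N(\S)\le\deg h_0=d'$ and therefore $E^*_N(\S)=d'$.

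I expect the only real obstacles to be bookkeeping rather than conceptual: phrasing the additivity of $\v$ along the substitution $y\mapsto G(x)$ together with the unique factorization of $h_0$ cleanly (routine, since $\F_q[x,y]$ is a UFD and $\F_q[[x]]$ a DVR), and pinning down the extremal partition estimate. It is exactly at that last step that the arithmetic condition $N\ge\binom{d'+1}{2}+2$ is consumed, which is why that precise threshold appears in the statement; the restriction $d'\ge 6$ is safe room that also lets one avoid inspecting the handful of smallest values of $d'$ on their own.
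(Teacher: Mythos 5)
Your proof is correct, and its skeleton matches the paper's: both arguments show that a minimal-degree solution $h$ of \eqref{eqcon} (which has degree $d'=E_N(\S)$ by maximality) must be irreducible, by factoring $h$, distributing the vanishing order $N$ among the factors, and using maximality together with Theorem~\ref{thm:E-bound} to cap the contribution of a factor of degree $\delta$ at $\binom{\delta+2}{2}-1$, ending in a numerical contradiction with $N\ge\binom{d'+1}{2}+2$. The difference lies in the endgame. The paper splits $h$ into just two factors $h_1h_2$ of degrees $d_1,d_2$, derives $(d_1-1)(d_2-1)\le 2$, invokes $d\ge 6$ to force one factor to be linear, and then needs a final refinement ($d_1=1$ gives $N_1\le 2$, pushing $N_2$ beyond the cap $\binom{d_2+2}{2}$). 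You instead factor completely into irreducibles, phrase the splitting of $N$ via additivity of the $x$-adic valuation under $y\mapsto G(x)$ (which tidies up the paper's somewhat loose ``if and only if'' splitting into $N_1+N_2=N$), and finish with one uniform extremal estimate over partitions: $\sum_j\bigl(\binom{\delta_j+2}{2}-1\bigr)\le\binom{d'+1}{2}+1$ for $m\ge 2$ positive parts, maximized at $(d'-1,1)$. That estimate is correct (write $\binom{\delta+2}{2}-1=(\delta^2+3\delta)/2$ and maximize $\sum_j\delta_j^2$), and it is indeed the only place the threshold $\binom{d'+1}{2}+2$ is consumed. A pleasant byproduct is that your argument nowhere needs $d'\ge 6$: it yields the conclusion for every $d'\ge 2$ (the range of $N$ is empty for $d'=1$), which slightly strengthens the theorem as stated. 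The only points to make explicit in a final write-up are the convention $w_j=\infty$ when $\gamma_j(x,G(x))=0$ (that case immediately produces an irreducible witness) and the short proof of the partition bound.
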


\begin{proof}
Let $d\geq 6$ and assume that $\binom{d+1}{2}+2\leq N$. 
  We will show that if a polynomial $h(x,y)$ satisfies the congruence~\eqref{eqcon} with total degree equal to $d=E_{N}({\cal S})$, then
  it must be irreducible.
  We proceed proving the result by assuming the opposite, that is
  $h(x,y) = h_1(x,y)h_2(x,y)$ and  $d_1 = \deg h_1(x,y)$ and
  $d_2 = \deg h_2(x,y)$ positive. Then $h(x,y)$
  satisfies \eqref{eqcon} if and only if for nonnegative integers $N_1,
  N_2$ with $N = N_1 + N_2$,
  \begin{equation*}
    h_1(x,G(x))\equiv 0\mod x^{N_1},\quad h_2(x,G(x))\equiv 0\mod x^{N_2}.
  \end{equation*}
  Without loss of generality, we may suppose that  $N_1$ and $N_2$ are
  positive integers. We also suppose that
  $E_{N_1}({\cal S}) = d_1$ and $E_{N_2}({\cal S}) = d_2$
  Applying Theorem~\ref{thm:E-bound}, we obtain
  \begin{equation*}
    \binom{d_1+d_2 +1}{2}\le N_1+N_2 <
    \binom{d_1+2}{2}+\binom{d_2+2}{2}.
  \end{equation*}
  This implies by simple manipulation that
  \[
   (d_1-1)(d_2-1)\leq 2.
  \]
  If the last inequality  holds, then either $d_1 = 1$, or $d_2=1$ by the assumption $d_1+d_2=d\geq 6$.
  If $d_1=1$, then $N_1\le 2$ and, applying again
  Theorem~\ref{thm:E-bound},  implies
  \[
    \binom{d_2 +2}{2}-2\le N_2 < \binom{d_2 +2}{2} 
  \]
i.e.
  \[
    \binom{d +1}{2}\le N < \binom{d + 1}{2}+2,
  \]
a contradiction. We proceed similarly in the case $d_2=1$.
\end{proof}

% We want to mention that for any $d$ and $N = \binom{d +1}{2}$ appears
% polynomials $h(x,y)$ which are divisible by $x$. Also, for
% $N = \binom{d +1}{2}+1$ or $N= \binom{d +1}{2}+2$, there are polynomials satisfying
% equation~\eqref{eqcon} which are divisible by $y-s_0-s_1x$.

\section{Expansion complexity of the explicit inversive congruential generator}\label{sec:inversive}

The \emph{explicit inversive congruential generator} is defined in a prime field
$\mathbb{F}_p$ ($p\geq 3$) by
\begin{equation}\label{eq:explicit}
  s_n = n^{p-2} \bmod p \quad \text{for }  n=0,1\ldots.
\end{equation}
Clearly, this is a purely periodic sequence with least period
length $p$. We show that its expansion complexity is maximal in terms
of Theorem~\ref{thm:E-bound}.

\begin{theorem}
The explicit inversive generator $\S=(s_n)$ defined in~\eqref{eq:explicit} has
  maximal expansion complexity for all $N=2,\ldots, p-1$, i.e. we have
    \begin{equation}\label{eq:thm3}
    E_N(\S)=d \quad \text{whenever} \quad \binom{d+1}{2}\leq N<\binom{d+2}{2}.
  \end{equation}
\end{theorem}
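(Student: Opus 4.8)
The plan is to combine Theorem~\ref{thm:E-bound} with an explicit closed form for $G(x)$. Since $s_n\equiv n^{-1}\pmod p$ for $1\le n\le p-1$ and $s_0=0$, the generating function satisfies $G(x)\equiv\sum_{n=1}^{p-1}x^n/n\pmod{x^p}$; that is, $G(x)$ agrees modulo $x^p$ with the formal power series $-\log(1-x)$, and equivalently $e^{-G(x)}\equiv 1-x\pmod{x^p}$. (This last identity is the reduction mod $p$ of the formal identity $\exp(\log(1-x))=1-x$, and it is legitimate because every coefficient occurring in degrees $<p$ has denominator prime to $p$.) Now fix $d\ge 1$ with $\binom{d+1}{2}\le p-1$, put $m=d-1$ and $K=\binom{d+1}{2}=\binom{m+2}{2}\le p-1$. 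By Theorem~\ref{thm:E-bound} we have $\binom{E_N(\S)+1}{2}\le N$, so $E_N(\S)\le d$ whenever $N<\binom{d+2}{2}$; and since a polynomial satisfying \eqref{eqcon} for some $N$ satisfies it for all smaller $N$ as well, \eqref{eq:thm3} will follow once we show that no nonzero $h(x,y)\in\fp[x,y]$ of total degree $\le m$ satisfies $h(x,G(x))\equiv 0\pmod{x^{K}}$.

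The conceptual heart of the argument is to straighten $G$ out by a change of uniformizer. Put $\psi(t)=1-e^{-t}=t-t^2/2+\cdots\in\fp[[t]]$ (again a $p$-integral truncation, using $K\le p-1$). Then $\psi(G(x))\equiv 1-(1-x)=x$, so $G$ is the compositional inverse of $\psi$ and also $G(\psi(t))\equiv t$; since $\psi'(0)=1$, substituting $x=\psi(t)$ preserves the order of vanishing at the origin, and hence $h(x,G(x))\equiv 0\pmod{x^{K}}$ is equivalent to $h(\psi(t),t)\equiv 0\pmod{t^{K}}$. Expanding $\psi(t)^i=(1-e^{-t})^i=\sum_{l=0}^i\binom il(-1)^le^{-lt}$ and regrouping by $e^{-lt}$ gives
\[
 h(\psi(t),t)=\sum_{l=0}^{m}q_l(t)\,e^{-lt},\qquad \deg q_l\le m-l ,
\]
and the linear map $h\mapsto(q_0,\dots,q_m)$ between these two $\binom{m+2}{2}$-dimensional spaces is a bijection — for each fixed $j$ the coefficients of $t^j$ in the $q_l$ are obtained from the coefficients $c_{ij}$ of $h$ by an invertible unitriangular binomial-coefficient matrix — so $h\ne 0$ precisely when the $q_l$ are not all zero. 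Thus it suffices to prove: \emph{the $\binom{m+2}{2}$ power series $t^ae^{-lt}$, with $0\le l\le m$ and $0\le a\le m-l$, are linearly independent modulo $t^{K}$.}

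This is equivalent to the nonsingularity over $\fp$ of the $K\times K$ matrix $\tilde M$ of their Taylor coefficients in degrees $0,\dots,K-1$, whose $((l,a),k)$ entry is $(-l)^{k-a}/(k-a)!$, since a nontrivial dependence modulo $t^K$ is exactly a nontrivial left-kernel vector of $\tilde M$. Writing $(-l)^{k-a}/(k-a)!=\tfrac{a!}{k!}\binom ka(-l)^{k-a}$ factors $\tilde M=D\,V\,D'$, where $D=\mathrm{diag}(a!)$ runs over the rows, $D'=\mathrm{diag}(1/k!)$ runs over the columns, and $V$ is the confluent Vandermonde matrix with nodes $0,-1,\dots,-m$ of respective multiplicities $m+1,m,\dots,1$; by the classical evaluation, $\det V=\pm\prod_{0\le l<l'\le m}(l-l')^{(m-l+1)(m-l'+1)}$. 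Since $K-1\le p-2$ and $m\le p-2$, every factorial in $D$ and $D'$ is a unit of $\fp$, and every factor $l-l'$ satisfies $1\le l'-l\le m\le p-2$ and is therefore nonzero in $\fp$; hence $\det\tilde M\ne 0$, which is what we wanted.

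I expect the only delicate part to be the $p$-integrality bookkeeping rather than any single calculation: one must check that $G\equiv-\log(1-x)$, that $x\equiv 1-e^{-G}$, and with them the entire reduction to exponential polynomials, are all meaningful over $\fp$, which holds because every truncation is taken in degrees $<p$; and, dually, that the confluent Vandermonde determinant avoids the characteristic, which holds because the nodes $0,-1,\dots,-m$ and the truncation length $K=\binom{d+1}{2}$ all stay below $p$. Both facts are precisely the force of the hypothesis $\binom{d+1}{2}\le N\le p-1$, and carrying this bound correctly through each step is the main thing to watch; the confluent Vandermonde determinant is standard, and the bijection $h\leftrightarrow(q_l)$ is a routine unitriangular computation.
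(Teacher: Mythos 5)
Your argument is correct, but it takes a genuinely different route from the paper's. The paper works with the differential relation $(1-x)G'(x)\equiv 1 \bmod x^{p-1}$ and argues by induction and contradiction: assuming a nonzero $h$ of total degree $d-1$ satisfies \eqref{eqcon} with $N=\binom{d+1}{2}$, it repeatedly applies the operator $f\mapsto (1-x)\frac{\partial f}{\partial x}+\frac{\partial f}{\partial y}$ to produce polynomials of the same degree vanishing to successively smaller order while eliminating the top-degree monomials one at a time, until the degree is forced to drop, contradicting the minimality of $d$. You instead integrate that relation: you identify $G$ with $-\log(1-x)$ modulo $x^p$, straighten it by the substitution $x=1-e^{-t}$, and reduce \eqref{eqcon} to the linear independence modulo $t^K$, with $K=\binom{d+1}{2}\le p-1$, of the exponential monomials $t^a e^{-lt}$, which you settle via the confluent Vandermonde determinant with nodes $0,-1,\dots,-m$; since $K\le p-1$ and $m\le p-2$, all factorials and node differences are units in $\F_p$, so the $p$-integrality bookkeeping you flag does go through, and the determinant formula itself is an integer polynomial identity, hence valid in $\F_p$. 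What your route buys is a direct, non-inductive proof that exhibits the nonsingularity of the full $K\times K$ coefficient matrix (a slightly stronger fact, essentially what the paper's Corollary invokes at $m=0$ when it asserts the determinant is nonzero for the unshifted sequence); what the paper's route buys is that it stays entirely inside $\F_p$, using only $G'\equiv 1/(1-x)$, with no characteristic-zero lifting, change of uniformizer, or Vandermonde evaluation. Two cosmetic points only: your change-of-basis matrices $\left[(-1)^l\binom{i}{l}\right]$ are triangular with diagonal entries $\pm 1$ rather than unitriangular, and for the node $l=0$ the entry formula $(-l)^{k-a}/(k-a)!$ uses the convention $0^0=1$; neither affects the argument.
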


By \eqref{eq:E-bound} and Theorem~\ref{star}, this result gives a
lower bound for
$E_N^{*}(\S)$ for $N\leq p-1$ which is in line with the asymptotic regime in Theorem~\ref{thm:prob}.

\begin{proof}
By Theorem~\ref{thm:E-bound}, we have $E_N(\S)\leq d$ if $N$ is in the
range~\eqref{eq:thm3}.
Thus it suffices to prove the lower bound $E_N(\S)\geq d$ for such
$N$. As the $N$th expansion complexity $E_N(\S)$ is a nondecreasing
function of $N$, it is enough to prove the result for integers $N$
having the form $N=\binom{d+1}{2}$  with some positive integer $d$.

We remark that the derivative $G'(x)$ of the generating function $G(x)$ of $\S$ satisfies
\begin{equation}\label{eq:inv-G}
 G'(x)=\left( \sum_{n=0}^{\infty}n^{p-2}x^n   \right)'
 =\sum_{\substack{0\leq n<\infty \\ p\nmid
     n+1}}x^n=\frac{1}{1-x}-x^{p-1}\frac{1}{1-x^p}.
\end{equation}

Now we prove the theorem by induction on $d$.
For $d=2$ $(N=3)$ the assertion follows from straightforward
computation. Next, we prove the theorem by contradiction. Assume that
there is a $d>2$ that does not satisfy the assertion. Let $d$ be the
smallest such integer. Then $E_{N-d}(\S)= \ldots = E_N(\S)= d-1$ with
$N=\binom{d+1}{2}$.

By recursion, we construct nonzero polynomials $f_i(x,y)\in\F_p[x,y]$
$(i=0,1\ldots, d-1)$ of total degree $d-1$ such that
  \begin{equation}
    \label{eq:fi}
    f_i(x,G(x))\equiv0\bmod x^{N-i}
  \end{equation}
  and
  \begin{equation}
    \label{eq:fi2}
    f_i(x,y)\text{ does not contain the terms }x^{d-1-\ell}y^{\ell},\
    0\le \ell<i.
  \end{equation}
  By assumption $E_N(\S)= d-1$, thus there is a nonzero polynomial
  $f(x,y)\in \F_p[x,y]$ of total degree  $d-1$ such that
  \begin{equation}
    \label{eq:fi0}
    f(x,G(x)) \equiv 0\bmod x^{N}.
  \end{equation}
  Put $f_0(x,y)=f(x,y)$. Now suppose that $f_i(x,y)$ has been constructed for some $0\leq i \leq d-2$. To construct the polynomial $f_{i+1}(x,y)$, we take the derivative of \eqref{eq:fi} with
  respect to $x$:
  \begin{equation}
    \label{eq:derivate1}
    \frac{\partial f_i}{\partial x}(x,G(x)) + \frac{\partial f_i}{\partial y}(x,G(x))
    G'(x) \equiv
    0 \bmod x^{N-i-1}.
  \end{equation}
 As
  \[
   G'(x)\equiv \frac{1}{1-x} \mod x^{p-1}
  \]
by~\eqref{eq:inv-G},  we obtain
  \begin{multline*}
    (1-x)\left (\frac{\partial f_i}{\partial x}(x,G(x)) +
      \frac{\partial f_i}{\partial y}(x,G(x))
    G'(x)\right ) \equiv \\
  (1-x)\frac{\partial f_i}{\partial x}(x,G(x)) + \frac{\partial
    f_i}{\partial y}(x,G(x)) \equiv 0\bmod x^{N-i-1}.
  \end{multline*}
  Put
  \begin{equation*}
    g_i(x,y) = (1-x)\frac{\partial f_i}{\partial x}(x,y) + \frac{\partial
    f_i}{\partial y}(x,y)\in\F_p[x,y].
  \end{equation*}
  Observe, that $g_i(x,y)$ and $f_i(x,y)$ have the same total
  degree. Indeed, if the total degree of $g_i(x,y)$ were strictly less than the total degree of $f_i(x,y)$, then we get a polynomial of total degree at mos $d-2$ satisfying \eqref{eq:fi} (with $i$ replaced by $i+1$), hence $E_{N-i-1}(\S)\leq d-2$, a contradiction. Moreover, the monomials of degree $d-1$  that appear in
  $g_i(x,y)$ must involve $x$ and appear in $f_i(x,y)$. If
  $f_i(x,y)=cg_i(x,y)$ for some nonzero $c\in\F_p$, then
  \[
   f_i(x,y)\equiv c^\ell \frac{\partial ^{\ell}f_i}{\partial
     ^{\ell}y}(x,y) \mod 1-x \quad \text{for all } \ell\geq 0.
  \]
  In particular, $(1-x)$ divides $f_i(x,y)$, so taking $f_i(x,y)/(1-x)$,
  we get a polynomial with total degree $d-2$ satisfying~\eqref{eq:fi},
  thus $E_{N-i}(\S)\leq d-2$, a contradiction.

  So, there must exist a nonzero linear combination $f_{i+1}(x,y)$ of
  $f_i(x,y)$ and $g_i(x,y)$ satisfying~\eqref{eq:fi} (with $i$ replaced by $i+1$)
  and~\eqref{eq:fi2}. If the total degree of $f_{i+1}(x,y)$
  were less than or equal to $d-2$, then $E_{N-i-1}(\S)\le d-2$, a
  contradiction.

  Finally, observe that if we construct $g_{d-1}(x,y)$ as above, then it does not contain the terms
  $x^{d-1-\ell}y^{\ell}$, $\ell = 0,\ldots d-2$, by construction, and
  also that it does not contain the term $y^{d-1}$. Thus, the total degree  of $g_{d-1}(x,y)$ is
  at most $d-2$. Moreover, it follows from~\eqref{eq:fi} for $i=d-1$,
  by the same argument as above,
  that
  \begin{equation*}
    g_{d-1}(x,G(x))\equiv 0\bmod x^{N-d},
  \end{equation*}
  thus $E_{N-d}(\S)\le d-2$, a contradiction.
\end{proof}

As a corollary, we obtain, for many different shifts of the
explicit inversive generator, a good lower bound on the expansion complexity.

\begin{cor}
  For any $d>0$ and all values of $1\le m< p$ but for $ (d-1)^2 \cdot \binom{d}{2}$ choices, the shifted
  explicit inversive generator $\mathcal{S}' = (s_{n+m})$ satisfies,
  \begin{equation*}
    E_N(\S')= d \quad \text{if} \quad \binom{d+1}{2}\leq N< \min\left\{ \binom{d+2}{2},p\right\}.
  \end{equation*}
\end{cor}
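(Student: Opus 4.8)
The plan is to reduce the statement about a shifted sequence $\mathcal{S}'=(s_{n+m})$ to the already-proved theorem about the unshifted explicit inversive generator by tracking how the expansion complexity can drop when one passes from $\mathcal{S}$ to $\mathcal{S}'$. The key structural fact is that the generating function $G'(x)$ of $\mathcal{S}'$ and the generating function $G(x)$ of $\mathcal{S}$ are related by a simple rational transformation: since $s_n = n^{p-2}\bmod p$ is $(1/n)$ away from its "multiplicative inverse" behavior, shifting the index by $m$ essentially replaces $G(x)$ by a function whose power series coefficients are $(n+m)^{p-2}$, and this can be expressed through $G$ evaluated at a shifted/scaled argument together with an explicit rational correction coming from the finitely many indices $n$ with $p \mid n+m$. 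I would first write down this transformation explicitly, noting that it is of the form "$G'$ is an $\F_p(x)$-linear combination of $1$ and $G$ evaluated at a Möbius-type substitution," so that an algebraic relation $h(x,G(x))\equiv 0 \bmod x^M$ of low degree for $\mathcal{S}$ translates into an algebraic relation of comparable degree for $\mathcal{S}'$, and vice versa.

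Next I would carry out the counting. If $E_N(\mathcal{S}') < d$ for some $N$ in the stated range, then there is a nonzero $h(x,y)\in\F_p[x,y]$ of total degree $\le d-1$ with $h(x,G'(x))\equiv 0\bmod x^N$. Pulling this back through the transformation above produces a nonzero polynomial $\tilde h(x,y)$ of total degree $\le d-1$ that vanishes to order $\ge N - c$ against $G(x)$ for a small explicit shift constant $c$ depending only on the correction terms; but because the unshifted generator has maximal expansion complexity for all $N \le p-1$, such a $\tilde h$ can exist only for a controlled set of "bad" $m$. Concretely, each fixed low-degree algebraic relation valid for infinitely many shifts would force a functional identity among the finitely many curves $h(x,y)=0$, and intersecting the relevant varieties bounds the number of admissible $m$ by a product of the form (number of components or degree of the curve) $\times$ (number of branch points), which is where the factor $(d-1)^2\cdot\binom{d}{2}$ should come from: roughly $(d-1)^2$ from Bézout-type intersection counts of two degree-$(d-1)$ objects and $\binom{d}{2}$ from the possible vanishing orders, i.e. from the number of monomials of degree $<d$ that could be the "leading obstruction." I would set up this bookkeeping carefully so that the exponents match.

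The final step is to assemble these estimates and invoke the theorem directly: for every $m$ outside the bad set, $\mathcal{S}'$ inherits maximal expansion complexity on the range $\binom{d+1}{2}\le N<\min\{\binom{d+2}{2},p\}$, the truncation at $p$ being forced because the transformation relating $G$ and $G'$ only behaves cleanly modulo $x^{p-1}$ or so (the periodicity of $\mathcal{S}$ with period $p$ means relations beyond $x^{p}$ are no longer controlled by the unshifted result). I expect the main obstacle to be making the transformation between $G$ and $G'$ genuinely algebraic with explicitly bounded degree and explicitly bounded "loss" in the order of vanishing — in particular handling the indices $n$ with $p\mid n+m$ (where $s_n=0$ but a generic inverse would be nonzero) without inflating the degree of $\tilde h$ — and then ensuring that the parameter count of exceptional $m$ is sharp enough to land exactly on $(d-1)^2\cdot\binom{d}{2}$ rather than a weaker polynomial bound. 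Once the transformation is pinned down, the rest is Bézout-style counting plus a direct appeal to the preceding theorem.
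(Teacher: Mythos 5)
Your plan rests on a transformation that does not exist in the form you need, and that is exactly where the proof breaks. Writing $G_m(x)=\sum_{n\ge 0}s_{n+m}x^n$ for the shifted generating function, the index shift is \emph{not} a M\"obius-type substitution in the argument of $G$: using periodicity it corresponds to multiplication by $x^{-m}$ modulo $1-x^p$ (equivalently $G_m(x)(1-x^p)\equiv x^{-m}G(x)(1-x^p)$ up to boundary terms), so pulling a relation $h(x,G_m(x))\equiv 0\bmod x^N$ of total degree $d-1$ back to a relation in $(x,G(x))$ inflates degrees by roughly $m(d-1)$, and $m$ ranges up to $p-1$. Hence there is no polynomial $\tilde h$ of ``comparable degree'' with a uniformly small loss $c$ in the order of vanishing, and the appeal to the maximality theorem for the unshifted sequence never gets off the ground. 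The subsequent B\'ezout bookkeeping is also never instantiated: you do not exhibit any concrete algebraic object in the shift parameter $m$ whose zero set contains the bad shifts, so the factors $(d-1)^2$ and $\binom{d}{2}$ remain guesses rather than outputs of a construction (and their guessed provenance does not match how they actually arise).

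The missing idea is to make $m$ itself an indeterminate and phrase everything as linear algebra over $\F_p(m)$, which is what the paper does. Fix $N=\binom{d+1}{2}$, set $G(x,m)=\sum_{i=0}^{p-1}(i+m)^{p-2}x^i$, and form the square matrix whose rows are the truncations modulo $x^N$ of the monomials $x^iG(x,m)^j$ with $i+j\le d-1$ (there are $\#M(d-1)=\binom{d+1}{2}=N$ of them). For a specialized value of $m$, one has $E_N(\mathcal{S}')=d$ exactly when the corresponding determinant is nonzero. After multiplying the entries by $\prod_{i=0}^{d-1}(m+i)^{d-1}$ and reducing via $(m+i)^p=m+i$, the entries become polynomials in $m$ of degree less than $(d-1)^2$, so the determinant is a polynomial in $m$ of explicitly bounded degree; it is not identically zero because the theorem for the unshifted generator says it is nonzero at $m=0$. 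The exceptional shifts are among the roots of this one polynomial, which gives the stated count directly, with no transformation between $G$ and $G_m$ and no intersection theory. Monotonicity of $E_N$ in $N$ and the upper bound of Theorem~\ref{thm:E-bound} then extend the conclusion from $N=\binom{d+1}{2}$ to the whole range $\binom{d+1}{2}\le N<\min\{\binom{d+2}{2},p\}$.
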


\begin{proof}
  We fix the value $N = \binom{d+1}{2}$ and take again the set of
  monomials
  \begin{equation*}
    M(d-1) = \{x^{i}y^{j} | i+j\le d-1\}.
  \end{equation*}
  Then we define the polynomial $G(x,m) =  \sum_{i=0}^{p-1}(i+m)^{p-2}x^i$ in the variables $m$ and $x$.
  For each monomial in $M(d)$, we can substitute $y=G(x,m)$ and
  reduce it modulo $x^N$ to obtain a polynomial of degree at most
  $N-1$ in the variable $x$.
  The set of all polynomials of degree in the variable $x$ less than $N$
  is a vector space   over the field of rational functions in the
  variable $m$ of dimension $N$. Each of the evaluations of the
  monomials gives a polynomial in that space, which can be seen as a
  vector of length $N$.

  All of the vectors can be written as rows of a matrix and
  $E_{N}(\S') = d$
  if and only if the determinant of this matrix is different from
  $0$. Multiply all the elements of this matrix by
  $\prod_{i=0}^{d-1}(m+i)^{d-1}$ and reduce them using that
  $(m+i)^p=(m+i)$, so the result is a matrix whose entries are
  polynomials in the variable $m$ and of degree less than $(d-1)^2$. The
  determinant is a polynomial of degree at most $(d-1)^2 \cdot \#M(d-1)$, which is not
  the zero polynomial because the determinant is different from zero
  for $m=0$. The number of roots of the determinant is at most $(d-1)^2 \cdot \#M(d-1)$,
  and this remark finishes the proof.
\end{proof}

%Domingo: Journal of Complexity requires a section which summarizes
%all the results of the paper.
\section{Conclusions}
\label{sec:conclusions}
In this paper, we have studied the expansion complexity and a slight
modification of this measure called i-expansion complexity. For the
expansion complexity, we have found an upper bound which answers
positively to a conjecture posed by M\'erai, Niederreiter, and
Winterhof~\cite{MeNiWi2016+}.

Regarding the i-expansion complexity, Theorem~\ref{thm:prob}
shows that its behavior is different  and it is expected that the
i-expansion is a stronger measure than the expansion
complexity. However, if the expansion complexity of the sequence is
maximal, then  by Theorem~\ref{star}, the i-expansion complexity is
essentially equal to the expansion complexity.

For the explicit inversive generator, we have shown that the expansion
complexity and the i-expansion complexity are maximal. Even if the
sequence is shifted randomly, it is expected that the expansion
complexity is quite large.

\subsection*{Acknowledgments}
The authors would like to thank Arne Winterhof for his helpful comments.

The research of the first author was supported by the Ministerio de Economia y Competitividad research project MTM2014-55421-P.
The second was partially supported by the Austrian Science Fund FWF Project F5511-N26 which is part of the Special Research Program "Quasi-Monte Carlo Methods: Theory and Applications".

D. G.-P.: Department of Mathematics, University of Cantabria, Santander 39005, Spain, 
\\
\textit{E-mail address:} \url{domingo.gomez@unican.es}

\smallskip

L. M.: Johann Radon Institute for Computational and Applied Mathematics,
Austrian Academy of Sciences, Altenberger Stra\ss e 69, A-4040 Linz, Austria
\\
\textit{E-mail address:} \url{laszlo.merai@oeaw.ac.at}

\smallskip

H. N.: Johann Radon Institute for Computational and Applied Mathematics,
Austrian Academy of Sciences, Altenberger Stra\ss e 69, A-4040 Linz, Austria\\
\textit{E-mail address:} \url{harald.niederreiter@oeaw.ac.at}

\end{document}